\documentclass[12pt]{amsart}  

\usepackage[latin1]{inputenc}
\usepackage{amsmath} 
\usepackage{amsfonts}
\usepackage{amssymb}
\usepackage{stmaryrd}
\usepackage{latexsym} 
\usepackage{graphicx}
\usepackage{subfigure}
\usepackage{color}
\usepackage{hyperref}
\usepackage{verbatim}
\usepackage[all]{xy}
\usepackage{graphics}
\usepackage{pdfsync}
\usepackage{tikz}
\usepackage{url}

\oddsidemargin=0in
\evensidemargin=0in
\textwidth=6.50in             

\headheight=10pt
\headsep=10pt
\topmargin=.5in
\textheight=8in

\theoremstyle{plain}
\newtheorem{theorem}{Theorem}
\newtheorem{proposition}[theorem]{Proposition}

\newtheorem{lemma}[theorem]{Lemma}

\newtheorem{definition}[theorem]{Definition}
\newtheorem{example}[theorem]{Example}

\theoremstyle{remark}

\numberwithin{equation}{section}
\setcounter{MaxMatrixCols}{20}







\newcommand{\bQ}{\mathbb{Q}}

\newcommand{\suchthat}{\;|\;}
\newcommand{\spam}{\operatorname{span}}



\newlength\cellsize \setlength\cellsize{15\unitlength}
\savebox2{%
\begin{picture}(15,15)
\put(0,0){\line(1,0){15}}
\put(0,0){\line(0,1){15}}
\put(15,0){\line(0,1){15}}
\put(0,15){\line(1,0){15}}
\end{picture}}
\newcommand\cellify[1]{\def\thearg{#1}\def\nothing{}%
\ifx\thearg\nothing
\vrule width0pt height\cellsize depth0pt\else
\hbox to 0pt{\usebox2\hss}\fi%
\vbox to 15\unitlength{
\vss
\hbox to 15\unitlength{\hss$#1$\hss}
\vss}}
\newcommand\tableau[1]{\vtop{\let\\=\cr
\setlength\baselineskip{-16000pt}
\setlength\lineskiplimit{16000pt}
\setlength\lineskip{0pt}
\halign{&\cellify{##}\cr#1\crcr}}}
\savebox3{%
\begin{picture}(15,15)
\put(0,0){\line(1,0){15}}
\put(0,0){\line(0,1){15}}
\put(15,0){\line(0,1){15}}
\put(0,15){\line(1,0){15}}
\end{picture}}
\newcommand\expath[1]{%
\hbox to 0pt{\usebox3\hss}%
\vbox to 15\unitlength{
\vss
\hbox to 15\unitlength{\hss$#1$\hss}
\vss}}
\newcommand\bas[1]{\omit \vbox to \cellsize{ \vss \hbox to \cellsize{\hss$#1$\hss} \vss}}


\begin{document}

\title[Lollipop and lariat symmetric functions]{Lollipop and lariat symmetric functions}

\author{Samantha Dahlberg}
\address{
Department of Mathematics,
 University of British Columbia,
 Vancouver BC V6T 1Z2, Canada}
\email{samadahl@math.ubc.ca}

\author{Stephanie van Willigenburg}
\address{
 Department of Mathematics,
 University of British Columbia,
 Vancouver BC V6T 1Z2, Canada}
\email{steph@math.ubc.ca}

\thanks{
Both authors were supported  in part by the National Sciences and Engineering Research Council of Canada.}
\subjclass[2010]{Primary 05E05; Secondary 05C15, 05C25}
\keywords{chromatic symmetric function,  elementary symmetric function, lariat graph, lollipop graph, Schur function}

\begin{abstract}
 We compute an explicit $e$-positive formula for the chromatic symmetric function of a lollipop graph, $L_{m,n}$. From here we deduce that there exist countably infinite distinct $e$-positive, and hence Schur-positive, bases of the algebra of symmetric functions whose generators are chromatic symmetric functions. Finally, we resolve 6 conjectures on the chromatic symmetric function of a lariat graph, $L_{n+3}$.
\end{abstract}

\maketitle

\section{Introduction}\label{sec:intro} A generalization of the chromatic polynomial of a graph that has recently been garnering much attention is the chromatic symmetric function \cite{Stan95}. As expected from the name, it generalizes many properties of the chromatic polynomial such as the number of acyclic orientations \cite[Theorem 3.3]{Stan95} though not the property of deletion-contraction. However, recently  a  triple-deletion property has been established \cite[Theorem 3.1]{Orellana} during the study of graphs with equal chromatic symmetric function, which we generalize to $k$-deletion in Proposition~\ref{prop:kdel}.  Another property in which the chromatic symmetric function potentially differs from the chromatic polynomial is that it may distinguish nonisomorphic trees \cite[p 170]{Stan95} and this has been one avenue of research \cite{Jose2+1, Jose2, MMW, Orellana}. Due to connections to other areas such as representation theory and algebraic geometry, another active avenue of research is the positivity of the chromatic symmetric function of a given graph when expanded into elementary symmetric functions, known as  \emph{$e$-positivity}  \cite{ChoHuh, Gash, GebSag, GP,    MM, Wolfe}, or when expanded into Schur functions, known as \emph{Schur-positivity} \cite{Gasharov, SW,  SWW}. Graphs of particular interest are indifference graphs due to their relation to imminants of  Jacobi-Trudi matrices \cite{StanStem}. One specific well-known family of indifference graphs are lollipop graphs, which are important in the study of random walks as they are graphs that achieve the maximum possible hitting time \cite{BW}, cover time \cite{Feige} and commute time \cite{Jonasson}. In \cite{GebSag} the chromatic symmetric function of lollipop graphs were confirmed to be $e$-positive indirectly, however the question still remains of an explicit combinatorial $e$-positive formula. 

In this paper we directly prove the $e$-positivity of the chromatic symmetric function of lollipop graphs in Theorem~\ref{the:epos}, via a recurrence relation in Theorem~\ref{the:X_{L_{m,n}}}, and consequently give an explicit combinatorial $e$-positive formula in Proposition~\ref{prop:lolliKm}. A useful technique is the aforementioned triple-deletion, which we generalize to $k$-deletion in Proposition~\ref{prop:kdel}. We subsequently prove the chromatic symmetric function distinguishes nonisomorphic lollipop graphs and then use  $e$-positivity to prove there exist countably infinite distinct $e$-positive, and hence Schur-positive, bases of the algebra of symmetric functions whose generators are chromatic symmetric functions of connected graphs in Theorem~\ref{the:infbases}. Finally in Section~\ref{sec:lariat} we resolve 6 conjectures dating from 1998 on the chromatic symmetric function of lariat graphs, which are special cases of lollipop graphs. More precisely, 5 of these conjectures are proved in Theorem~\ref{the:lariat} and we provide a counterexample to the sixth.

\section{Background}\label{sec:background} Before we prove our results, let us recall useful combinatorial concepts and the algebra of symmetric functions. A \emph{partition} $\lambda = (\lambda_1, \lambda_2, \ldots , \lambda_\ell)$ of $N$, denoted by $\lambda \vdash N$, is a list of positive integers whose \emph{parts} $\lambda_i$ satisfy $\lambda_1\geq \lambda _2\geq \cdots \geq\lambda_\ell >0$ and $\sum _{i=1}^\ell \lambda _i = N$. If $\lambda$ has exactly $a_i$ parts equal to $i$ for $1\leq i \leq N$ we often denote $\lambda$ by $\lambda = (1^{a_1}, 2^{a_2}, \ldots , N^{a_N})$, from which we can obtain the partition known as the \emph{transpose} of $\lambda$, denoted by $\lambda^t$ and given by $\lambda ^t = (a_1+\cdots + a_N, a_2+\cdots +a_N, \ldots , a_N)$ with zeros removed.

The algebra of symmetric functions is a subalgebra of $\bQ [[ x_1, x_2, \ldots ]]$ and can be defined as follows. We define the \emph{$i$-th elementary symmetric function} $e_i$ for $i\geq 1$ to be
$$e_i = \sum _{j_1<j_2<\cdots < j_i} x_{j_1}\cdots x_{j_i}$$and given a partition $\lambda = (\lambda _1, \lambda_2, \ldots , \lambda _\ell)$ we define the \emph{elementary symmetric function} $e_\lambda$ to be
$$e_\lambda = e_{\lambda _1}e_{\lambda _2}\cdots e_{\lambda _\ell}.$$The \emph{algebra of symmetric functions}, $\Lambda$, is then the graded algebra
$$\Lambda = \Lambda ^0 \oplus \Lambda ^1 \oplus \cdots$$where $\Lambda ^0 = \spam \{1=e_0\} = \bQ$ and for $N\geq 1$
$$\Lambda ^N = \spam \{e_\lambda \suchthat \lambda \vdash N\}.$$In fact the elementary symmetric functions form a basis for $\Lambda$ and the fundamental theorem of symmetric functions states that
$$\Lambda = \bQ [ e_1, e_2, \ldots ].$$ Perhaps the most celebrated basis of $\Lambda$ is the basis of Schur functions. For a partition $\lambda = (\lambda _1, \lambda _2, \ldots, \lambda _\ell)\vdash N$, we define the \emph{Schur function} $s_\lambda $ to be
\begin{equation}\label{eq:JT}
s_{\lambda }=\det \left( e_{\lambda ^t_i -i +j}\right) _{1\leq i,j \leq \lambda_1}
\end{equation}where if $\lambda ^t_{i}-i+j <0$ then $e _{\lambda ^t_{i}-i+j}=0$.

If a symmetric function can be written as a nonnegative linear combination of elementary symmetric functions then we say it is \emph{$e$-positive}, and likewise if a symmetric function can be written as a nonnegative linear combination of Schur functions then we say it is \emph{Schur-positive}. Although not clear from  \eqref{eq:JT}, it is a classical result that any elementary symmetric function is Schur-positive. However, in general 
little is known about the $e$-positivity and Schur-positivity of the symmetric function that will be our main object of study, the chromatic symmetric function. One well-known result in this direction is by Gasharov~\cite{Gasharov}, who gave an explicit Schur-positive formula for the chromatic symmetric function of the large class of graphs known as incomparability graphs of $(3+1)$-free posets. 

This function is dependent on a graph that is \emph{finite} and \emph{simple} and from now on we will assume that all our graphs satisfy this property. This function is also dependent on the concept of a proper colouring. Namely, given a graph, $G$, with vertex set $V$ a \emph{proper colouring} $\kappa$ of $G$ is a function
$$\kappa : V\rightarrow \{1,2,\ldots\}$$such that if $v_1, v_2 \in V$ are adjacent, then $\kappa(v_1)\neq \kappa(v_2)$. We are now ready to define the chromatic symmetric function.

\begin{definition}\cite[Definition 2.1]{Stan95}\label{def:chromsym} For a graph $G$ with vertex set $V=\{v_1, v_2, \ldots, v_N\}$ and edge set $E$, the \emph{chromatic symmetric function} is defined to be
$$X_G = \sum _\kappa x_{\kappa(v_1)}x_{\kappa(v_2)}\cdots x_{\kappa(v_N)}$$
where the sum is over all proper colourings $\kappa$ of $G$. If $G=\emptyset$ then $X_G=1$.
\end{definition}

The chromatic symmetric function is a natural generalization of the \emph{chromatic polynomial}, $\chi_G(x)$, of a graph $G$, which is the number of proper colourings  with $x$ colours. More precisely, if $X_G(1^x)$ means setting $x$ variables equal to 1 and the rest equal to 0, then we have the following result.

\begin{proposition}\cite[Proposition 2.2]{Stan95}\label{prop:chromsymtopoly} Let $G$ be a graph and $x$ be a positive integer. Then
$$\chi_G(x)=X_G(1^x).$$
\end{proposition}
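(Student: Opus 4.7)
My plan is to prove this by direct substitution into Definition~\ref{def:chromsym}, observing that the specialization $x_1 = \cdots = x_x = 1$ and $x_{x+1} = x_{x+2} = \cdots = 0$ acts as an indicator on the proper colourings indexed by the sum.

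More precisely, I would first rewrite
$$X_G(1^x) = \sum_\kappa \left. x_{\kappa(v_1)}\cdots x_{\kappa(v_N)} \right|_{x_1=\cdots=x_x=1,\, x_{x+1}=\cdots=0}$$
where the sum ranges over all proper colourings $\kappa : V \to \{1,2,\ldots\}$. Then I would analyze the monomial contributed by a single proper colouring $\kappa$. If $\kappa(v_i) \in \{1,\ldots,x\}$ for every $i$, every factor of the monomial becomes $1$ and the monomial contributes $1$ to the sum. If on the other hand some $\kappa(v_i) > x$, then at least one factor becomes $0$ and the entire monomial vanishes.

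Consequently, the surviving summands are in bijection with the proper colourings $\kappa : V \to \{1,\ldots,x\}$, each contributing exactly $1$. Since $\chi_G(x)$ is by definition the number of such proper colourings, I would conclude $X_G(1^x) = \chi_G(x)$.

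There is no real obstacle here: the argument is a one-line unwinding of the definitions, and the only thing to be careful about is to note that $X_G$ is a formal power series in infinitely many variables, so the substitution $x_i = 0$ for $i > x$ is legitimate because $X_G$ has bounded degree $N = |V|$, making the resulting expression a genuine polynomial in the finitely many remaining variables (here all specialised to $1$).
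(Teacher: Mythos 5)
Your proof is correct: the paper itself cites this result from Stanley without proof, and your specialization argument --- each proper colouring's monomial evaluates to $1$ precisely when all colours lie in $\{1,\ldots,x\}$ and to $0$ otherwise, so $X_G(1^x)$ counts proper colourings with $x$ colours --- is exactly the standard argument in Stanley's original paper. Your closing remark that the substitution is legitimate because $X_G$ is homogeneous of bounded degree $N=|V|$ is a nice touch of care, and nothing is missing.
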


\begin{example}\label{ex:chromsymtopoly} Let $K_m$ for $m\geq 1$ be the complete graph on $m$ vertices, each pair of which are adjacent. Then when we compute $X_{K_m}$, since every vertex must be coloured a different colour, and given $m$ colours this can be done in $m!$ ways, we obtain
$$X_{K_m} = m! \sum _{i_1<i_2 < \cdots < i_m} x_{i_1}x_{i_2}\cdots x_{i_m}$$and hence
$$\chi _{K_m}(x) = X_{K_m}(1^x) = m! {{x}\choose{m}} = x(x-1)(x-2)\cdots (x-(m-1)).$$
\end{example}

The chromatic polynomial satisfies the key property of \emph{deletion-contraction}, namely if $G$ is a graph with edge $\epsilon$, $G- \{\epsilon\}$ denotes $G$ with $\epsilon$ deleted and $G/\{\epsilon\}$ denotes $G$ with $\epsilon$ contracted and its vertices at either end identified, then
\begin{equation}\label{eq:delcon}\chi _G(x)= \chi_{G-\{ \epsilon\}}(x) - \chi_{G/ \{\epsilon\}}(x).\end{equation}This property is not satisfied by the chromatic symmetric function, however, recently Orellana and Scott proved the following beautiful \emph{triple-deletion} property, which will be useful to us in the next section. It is reliant on the existence of a 3-cycle, that is, 3 edges that form a triangle.

\begin{proposition}\cite[Theorem 3.1]{Orellana}\label{prop:OS} Let $G$ be a graph with edge set $E$ such that $\epsilon_1, \epsilon_2, \epsilon_3 \in E$ form a triangle. Then
$$X_G = X_{G- \{\epsilon_1\}}+X_{G- \{\epsilon_2\}}- X_{G- \{\epsilon_1, \epsilon_2\}}.$$
\end{proposition}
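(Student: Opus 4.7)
The natural plan is to work directly from the definition of $X_H$ as a sum of monomials over proper colourings, so write $X_H = \sum_{\kappa \in \mathcal{P}(H)} x^\kappa$ where $x^\kappa = \prod_v x_{\kappa(v)}$. Label the triangle so that $\epsilon_1 = \{u,v\}$, $\epsilon_2 = \{v,w\}$, $\epsilon_3 = \{u,w\}$. The key observation is that $\epsilon_3$ remains an edge in each of the three graphs $G-\{\epsilon_1\}$, $G-\{\epsilon_2\}$, and $G-\{\epsilon_1,\epsilon_2\}$, so every colouring $\kappa$ appearing on the right-hand side automatically satisfies $\kappa(u) \neq \kappa(w)$.

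With that in hand, I would classify the colourings counted by each $X_{G-\cdots}$ according to the possible equalities among the three values $\kappa(u), \kappa(v), \kappa(w)$. Writing
\begin{align*}
A &= \{\kappa : \kappa(u)=\kappa(v), \ \kappa(v) \neq \kappa(w), \ \kappa \text{ proper on } G-\{\epsilon_1\}\},\\
B &= \{\kappa : \kappa(v)=\kappa(w), \ \kappa(u) \neq \kappa(v), \ \kappa \text{ proper on } G-\{\epsilon_2\}\},
\end{align*}
I would check that (i) $\mathcal{P}(G-\{\epsilon_1\}) = \mathcal{P}(G) \sqcup A$, (ii) $\mathcal{P}(G-\{\epsilon_2\}) = \mathcal{P}(G) \sqcup B$, and (iii) $\mathcal{P}(G-\{\epsilon_1,\epsilon_2\}) = \mathcal{P}(G) \sqcup A \sqcup B$. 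The crucial input in (iii) is that the case $\kappa(u)=\kappa(v)=\kappa(w)$ cannot occur, since $\epsilon_3$ survives deletion; this is precisely what rules out a fourth ``inclusion-exclusion'' term and makes the identity clean.

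Translating (i)-(iii) into generating functions gives
\begin{equation*}
X_{G-\{\epsilon_1\}} = X_G + S_A, \qquad X_{G-\{\epsilon_2\}} = X_G + S_B, \qquad X_{G-\{\epsilon_1,\epsilon_2\}} = X_G + S_A + S_B,
\end{equation*}
where $S_A = \sum_{\kappa \in A} x^\kappa$ and $S_B = \sum_{\kappa \in B} x^\kappa$. Subtracting the third from the sum of the first two yields $X_G$, which is the desired identity.

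The only genuinely delicate point is verifying the disjoint-union descriptions (i)-(iii); in particular, one must be careful that in (iii) a colouring satisfying $\kappa(u)=\kappa(v)$ \emph{and} $\kappa(v)=\kappa(w)$ would force $\kappa(u)=\kappa(w)$, which is forbidden by $\epsilon_3$. Once this is noted, the rest is bookkeeping. No knowledge of the $e$- or Schur-basis expansion is required; the argument is purely at the level of proper colourings.
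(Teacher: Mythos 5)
Your proof is correct: the disjoint-union descriptions (i)--(iii) all hold, with the one genuinely delicate point---that $\epsilon_3$ survives every deletion and so kills the $\kappa(u)=\kappa(v)=\kappa(w)$ class in (iii)---correctly identified and handled, after which the three generating-function identities subtract to give the claim. However, your route differs from the paper's. The paper does not reprove this proposition at all (it is quoted from Orellana and Scott); what the paper proves is the generalization, Proposition~\ref{prop:kdel}, whose $k=3$ case is an equivalent rearrangement of this identity, and it does so by a sign-reversing, fixed-point-free involution on signed pairs $(\kappa,S)$ with $S\subseteq[k-1]$: one toggles $\epsilon_j$ in or out of $S$ at the smallest index $j$ with $\kappa(v_j)\neq\kappa(v_{j+1})$, such a $j$ existing because the retained edge $\epsilon_k$ forces at least two colours on the cycle---the same structural role your $\epsilon_3$ plays. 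For $k=3$ your argument is essentially the involution unfolded into an explicit partition of the colouring sets, and it buys transparency: it says positively which colourings each of the four terms counts ($\mathcal{P}(G)$, $A$, $B$) rather than only exhibiting a signed cancellation. What the paper's involution buys is uniformity in $k$: your bookkeeping would require tracking equality patterns across all $2^{k-1}$ edge subsets of a $k$-cycle, which becomes unwieldy, whereas the toggling argument is a two-line proof for every $k\geq 3$ simultaneously. Both arguments work purely at the level of proper colourings, needing no basis expansions, so your closing remark matches the paper's setting.
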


In fact, we can generalize this to $k$-cycles, which we call \emph{$k$-deletion}.

\begin{proposition}\label{prop:kdel} Let $G$ be a graph with edge set $E$ such that $\epsilon_1,\epsilon_2,\dots, \epsilon_{k} \in E$ form a $k$-cycle for $k\geq 3$. Then
$$\sum_{S\subseteq [k-1]}(-1)^{|S|}X_{G-\cup_{i\in S}\{\epsilon_i\}}=0.$$
\end{proposition}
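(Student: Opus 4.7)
The plan is to prove this by a direct inclusion--exclusion argument on proper colourings, so induction on $k$ is not needed. I would not try to bootstrap from the triple-deletion (Proposition~\ref{prop:OS}) since the cleanest proof actually gives a uniform treatment that specializes to Proposition~\ref{prop:OS} when $k=3$.

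Let the $k$-cycle have vertices $v_1, v_2, \ldots, v_k$ with $\epsilon_i = v_iv_{i+1}$ for $1 \le i \le k-1$ and $\epsilon_k = v_kv_1$. The key observation is that for every $S \subseteq [k-1]$ the graph $G - \bigcup_{i\in S}\{\epsilon_i\}$ still contains $\epsilon_k$ and all edges of $G$ outside the cycle. Call a colouring $\kappa : V \to \{1,2,\ldots\}$ of $G$ \emph{admissible} if every edge of $G$ outside $\{\epsilon_1,\ldots,\epsilon_{k-1}\}$ (in particular $\epsilon_k$) is properly coloured, and for each admissible $\kappa$ let
$$T(\kappa) = \{\,i \in [k-1] : \kappa(v_i) = \kappa(v_{i+1})\,\}$$
record exactly which of the first $k-1$ cycle edges are \emph{improperly} coloured. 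Then a proper colouring of $G - \bigcup_{i\in S}\{\epsilon_i\}$ is precisely an admissible $\kappa$ with $T(\kappa) \subseteq S$, so
$$X_{G-\cup_{i\in S}\{\epsilon_i\}} = \sum_{\kappa \text{ admissible},\; T(\kappa)\subseteq S} x_{\kappa(v_1)}\cdots x_{\kappa(v_N)}.$$

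Substituting this into the proposed sum and switching the order of summation, I would obtain
$$\sum_{S \subseteq [k-1]}(-1)^{|S|} X_{G-\cup_{i\in S}\{\epsilon_i\}} = \sum_{\kappa \text{ admissible}} x^\kappa \sum_{T(\kappa) \subseteq S \subseteq [k-1]} (-1)^{|S|}.$$
The inner sum is a standard alternating sum over an upper set of a Boolean lattice, and it vanishes unless $T(\kappa) = [k-1]$, in which case it equals $(-1)^{k-1}$. Thus the whole expression collapses to $(-1)^{k-1}\sum_\kappa x^\kappa$ where $\kappa$ ranges over admissible colourings with $T(\kappa) = [k-1]$.

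The finishing step — and the only place the cycle structure is used — is showing this remaining set of colourings is empty. If $T(\kappa) = [k-1]$, then $\kappa(v_i) = \kappa(v_{i+1})$ for every $i \in [k-1]$, forcing $\kappa(v_1) = \kappa(v_2) = \cdots = \kappa(v_k)$. But admissibility requires $\epsilon_k = v_kv_1$ to be properly coloured, i.e.\ $\kappa(v_k) \ne \kappa(v_1)$, a contradiction. Hence the sum is $0$. The main obstacle is essentially bookkeeping: making sure the alternating sum over $S \supseteq T(\kappa)$ is correctly evaluated and that the asymmetric roles of $\epsilon_k$ versus $\epsilon_1,\ldots,\epsilon_{k-1}$ are tracked consistently throughout.
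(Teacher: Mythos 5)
Your proof is correct. It takes a genuinely different, though closely related, route from the paper's: the paper proves the identity with a sign-reversing involution on pairs $(\kappa,S)$, where $S\subseteq[k-1]$ and $\kappa$ is a proper colouring of $G-\cup_{i\in S}\{\epsilon_i\}$, toggling in and out of $S$ the smallest index $j\in[k-1]$ with $\kappa(v_j)\neq\kappa(v_{j+1})$ (such a $j$ exists because $\epsilon_k$ survives every deletion, so the cycle cannot be monochromatic). You instead expand each $X_{G-\cup_{i\in S}\{\epsilon_i\}}$ over admissible colourings with improperly coloured edge set $T(\kappa)\subseteq S$, interchange the order of summation, and evaluate the interval sum $\sum_{T(\kappa)\subseteq S\subseteq[k-1]}(-1)^{|S|}$, which vanishes unless $T(\kappa)=[k-1]$ --- a case ruled out by exactly the same structural fact about $\epsilon_k$ that powers the paper's involution. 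In effect, the paper's involution is the bijectivization of your algebraic cancellation: your smallest surviving index is precisely the paper's toggled index $j$. What your approach buys is a transparent explanation of why the sum ranges over $S\subseteq[k-1]$ rather than $S\subseteq[k]$ (the distinguished edge $\epsilon_k$ is what makes $T(\kappa)=[k-1]$ impossible), plus a closed-form bookkeeping via inclusion--exclusion that would adapt easily to variants where some colourings with $T(\kappa)=[k-1]$ do survive; what the paper's approach buys is a purely combinatorial, term-by-term pairing with no manipulation of nested sums. Both correctly specialize at $k=3$ to the Orellana--Scott triple-deletion, as you note.
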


\begin{proof} We assume that $G$ has a $k$-cycle for $k\geq 3$, with edges $\epsilon_1,\epsilon_2,\dots, \epsilon_{k}$ and vertices $v_1,v_2,\dots, v_{k}$ as below.
\begin{figure}[h]
\begin{center}
\begin{tikzpicture}[scale=1]
\draw (0,0) ellipse (3 and 2);
\coordinate (1) at  (-1.7,0);
\coordinate (2) at (-1.4,.7);
\coordinate (3) at (-.5,1.2);
\coordinate (4) at (.5,1.2);
\coordinate (5) at (1.4,.7);
\coordinate (6) at  (1.7,0);
\coordinate (7) at (1.4,-.7);
\coordinate (8) at  (.5,-1.2);
\coordinate (9) at  (-.5,-1.2);
\coordinate (10) at  (-1.4,-.7);
\draw[->][black, thick] (1)--(2)--(3)--(4)--(5);
\draw[->][black, dashed] (5)--(6)--(7)--(8)--(9)--(10)--(1);
\draw (0,1.4) node {$\epsilon_{k}$};
\draw (-3,1.3) node {$G$};
\draw (-1,1.2) node {$\epsilon_{1}$};
\draw (-1.8,.5) node {$\epsilon_{2}$};
\draw (1.1,1.2) node {$\epsilon_{k-1}$};
\draw (-.4,.8) node {$v_{1}$};
\draw (.5,.8) node {$v_{k}$};
\draw (-1,.5) node {$v_{2}$};
\draw (-1.3,-.1) node {$v_{3}$};
\draw (1.4,.4) node {$v_{k-1}$};
\filldraw [black] (1) circle (4pt);
\filldraw [black] (2) circle (4pt);
\filldraw [black] (3) circle (4pt);
\filldraw [black] (4) circle (4pt);
\filldraw [black] (5) circle (4pt);
\end{tikzpicture}
\end{center}
\end{figure}

We will prove the formula using a sign-reversing involution without any fixed points. Our signed set will be pairs $(\kappa, S)$ where $S\subseteq[k-1]$ and $\kappa$ is a proper colouring on the graph $G-\cup_{i\in S}\{\epsilon_i\}$. The weight of this pair will be $(-1)^{|S|}x_{\kappa}$, that is, the monomial associated to the colouring $\kappa$ with sign determined by $|S|$. 
Since the edge $\epsilon_k$ is present in all graphs $G-\cup_{i\in S}\{\epsilon_i\}$ for any $S\subseteq [k-1]$  all colourings will have at least two colours on the vertices $v_1,v_2,\dots, v_{k}$. Hence, there will always exist a smallest $j\in [k-1]$ where the two colours on $v_j$ and $v_{j+1}$ are different. We map $(\kappa, S)$ to $(\kappa, T)$ where $T=S\cup\{j\}$ if $j\notin S$ and otherwise $T = S\setminus\{j\}$. The colouring $\kappa$ is a proper colouring on $G-\cup_{i\in S}\{\epsilon_i\}$ and $G-\cup_{i\in T}\{\epsilon_i\}$ since we are only including or excluding the edge $\epsilon_j$, which has adjacent vertices $v_j$ and $v_{j+1}$ with different colours. We can easily see that this is an involution that only switches the sign of the weight, and since there are no  fixed points our desired sum is zero. 
\end{proof}

\section{The chromatic symmetric function of lollipops}\label{sec:lollipop} We now come to our object of study, the chromatic symmetric function of lollipop graphs, or \emph{lollipop symmetric functions}. However, before we define them we recall some necessary graphs, whose definitions we give for clarity. The \emph{complete graph} $K_m$ for $m\geq 1$ has $m$ vertices, each pair of which are adjacent. The \emph{path graph} $P_n$ for $n\geq 1$ has $n$ vertices and is the tree with 2 vertices of degree 1 and $n-2$ vertices of degree 2 for $n\geq 2$ and $P_1=K_1$. From these we obtain the \emph{lollipop graph} $L_{m,n}$ for $m\geq 1, n\geq 1$, which we construct from the disjoint union of $K_m$ and $P_n$ by connecting with an edge one vertex of $K_m$ and one vertex of degree 1 in $P_n$. For example, $L_{3,6}$ is below.

\begin{figure}[h]
\begin{center}
\begin{tikzpicture}
\filldraw [black] (0,0) circle (4pt);
\filldraw [black] (1.5,1) circle (4pt);
\filldraw [black] (0,2) circle (4pt);
\filldraw [black] (2.5,1) circle (4pt);
\filldraw [black] (3.5,1) circle (4pt);
\filldraw [black] (4.5,1) circle (4pt);
\filldraw [black] (5.5,1) circle (4pt);
\filldraw [black] (6.5,1) circle (4pt);
\filldraw [black] (7.5,1) circle (4pt);
\draw (0,0)--(1.5,1)--(0,2)--(0,0);
\draw (1.5,1)--(7.5,1);
\end{tikzpicture}
\end{center}
\end{figure}

For convenience we define 
$$K_0=P_0=L_{0,0}=\emptyset$$namely, the empty graph, and we define  $L_{m,0}=K_m$ and $L_{0,n}=P_n$. We are now ready to define lollipop symmetric functions.

\begin{definition}\label{def:lolli} Let $m\geq 0$ and $ n\geq 0$. Then the \emph{lollipop symmetric function} is given by $X_{L_{m,n}}$, that is, the chromatic symmetric function of $L_{m,n}$.
\end{definition}

By definition $X_{L_{m,0}}=X_{K_m}$ and $X_{L_{0,n}}=X_{P_n}$  both of which are $e$-positive, and hence Schur-positive. More precisely by, for example, \cite[Theorem 8]{ChovW}
\begin{equation}\label{eq:Km}
X_{K_m}=m! e_m
\end{equation}for $m\geq 1$, and by \cite[Theorem 3.2]{Wolfe} the coefficient of $e_1^{a_1}e_2^{a_2}\cdots e_n^{a_n}$ in $X_{P_n}$ for $n\geq 1$ is
\begin{equation}\label{eq:Pn}
{{a_1+\cdots + a_n} \choose {a_1, \ldots , a_n}} \prod _{j=1} ^n (j-1)^{a_j} + \sum _{i\geq 1}\left( {{(a_1+\cdots + a_n)-1}\choose {a_1, \ldots , a_i-1 , \ldots , a_n}} \left( \prod _{j=1, j\neq i} ^n (j-1)^{a_j}\right) (i-1)^{a_i-1}\right).
\end{equation}Hence a natural avenue to pursue is whether lollipop symmetric functions are $e$-positive, and hence Schur-positive, and if so what is an explicit combinatorial $e$-positive formula. The former was done implicitly in \cite[Corollary 7.7]{GebSag}, and we will now prove the former explicitly and give the latter in Proposition~\ref{prop:lolliKm}. In order to do this we need the following observation.

Note that for our results we will often have the bound $m\geq 2$, however, this does not restrict our results and is simply for clarity of exposition. This is because for $n\geq 0$
$$L_{2,n}=L_{1,n+1} = L_{0,n+2}=P_{n+2}.$$Also $L_{1,0}=L_{0,1}=K_1=P_1$ and we know that the chromatic symmetric function in this case is
$$x_1+x_2+\cdots = e_1=s_1.$$Finally, by definition, the chromatic symmetric function of $L_{0,0}$ is $1=e_0=s_0$.

\begin{theorem}\label{the:X_{L_{m,n}}} For $m\geq 2$ and $n\geq 0$ we have that
$$X_{L_{m,n}}=(m-1)X_{L_{m-1,n+1}} - (m-2) X_{K_{m-1}}X_{P_{n+1}}.$$
\end{theorem}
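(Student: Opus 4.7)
The plan is to introduce an auxiliary family of graphs $H_0,H_1,\ldots,H_{m-1}$ that interpolates from the disjoint union $K_{m-1}\cup P_{n+1}$ through $L_{m-1,n+1}$ up to $L_{m,n}$. On the common vertex set $\{u_1,\ldots,u_{m-1},w_0,w_1,\ldots,w_n\}$, define $H_k$ to have as edges all of $K_{m-1}$ on $\{u_1,\ldots,u_{m-1}\}$, the path edges $\{w_i,w_{i+1}\}$ for $0\le i<n$, and the $k$ additional edges $\{u_1,w_0\},\{u_2,w_0\},\ldots,\{u_k,w_0\}$. Because $K_{m-1}$ is vertex-transitive, the isomorphism type of $H_k$ depends only on $k$. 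One then has $X_{H_0}=X_{K_{m-1}}X_{P_{n+1}}$, $X_{H_1}=X_{L_{m-1,n+1}}$, and $H_{m-1}\cong L_{m,n}$, since in $H_{m-1}$ the vertex $w_0$ is adjacent to every vertex of $K_{m-1}$, completing a $K_m$, with the remainder $w_1,\ldots,w_n$ forming a path attached at $w_0$.

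The key step is to establish the recurrence $X_{H_k}=2X_{H_{k-1}}-X_{H_{k-2}}$ for $2\le k\le m-1$. To obtain it, I apply Proposition~\ref{prop:OS} to the triangle $\{u_1,u_k,w_0\}$ in $H_k$, which is genuinely present once $k\ge 2$, choosing the edges $\epsilon_1=\{u_1,w_0\}$ and $\epsilon_2=\{u_k,w_0\}$. Deleting $\epsilon_1$ alone leaves $w_0$ adjacent to the $(k-1)$-subset $\{u_2,\ldots,u_k\}$ of the clique; deleting $\epsilon_2$ alone leaves $w_0$ adjacent to $\{u_1,\ldots,u_{k-1}\}$; deleting both leaves $w_0$ adjacent to the $(k-2)$-subset $\{u_2,\ldots,u_{k-1}\}$. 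By the vertex-transitivity of $K_{m-1}$, the first two graphs are each isomorphic to $H_{k-1}$ and the third is isomorphic to $H_{k-2}$, so Proposition~\ref{prop:OS} immediately delivers the claimed recurrence.

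Finally, the recurrence says that the consecutive differences $X_{H_k}-X_{H_{k-1}}$ are constant in $k$, so $X_{H_k}=X_{H_0}+k(X_{H_1}-X_{H_0})$. Setting $k=m-1$ yields
$$X_{L_{m,n}}=X_{H_{m-1}}=(m-1)X_{L_{m-1,n+1}}-(m-2)X_{K_{m-1}}X_{P_{n+1}},$$
as required. The degenerate case $m=2$ is not covered by the recurrence (which needs $k\ge 2$) but follows trivially from $L_{2,n}=L_{1,n+1}=P_{n+2}$ noted in the paper. The only step requiring care is the vertex-transitivity argument identifying $H_k-\{u_i,w_0\}$ with the canonical $H_{k-1}$ (and similarly for the double deletion); once that is verified, the triple-deletion around $w_0$ implements a discrete second difference, forcing linearity of $X_{H_k}$ in $k$, and the result drops out at $k=m-1$.
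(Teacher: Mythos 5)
Your proof is correct and takes essentially the same approach as the paper: your interpolating family $H_k$ coincides (up to reversed indexing) with the paper's graphs $L_{m,n}-S_{m-1-k}$ obtained by deleting junction-to-clique edges, and the triangle you feed into Proposition~\ref{prop:OS} (two junction edges plus one clique edge) is the same one used there. The only cosmetic difference is that you solve the recurrence $X_{H_k}=2X_{H_{k-1}}-X_{H_{k-2}}$ by noting it forces $X_{H_k}$ to be affine in $k$, whereas the paper unrolls the recurrence iteratively to reach the same conclusion.
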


\begin{proof} We will first focus on $L_{m,n}$ in order to deduce our eventual result on $X_{L_{m,n}}$. First let $m=2$. Then $L_{2,n} = L_{1,n+1}$ and hence
$$X_{L_{2,n}}=X_{L_{1,n+1}}$$as desired. 

Now let $m\geq 3$ and consider $L_{m,n}$. This graph has triangles, so we will name some edges in order to identify the triangles we want to focus on. There is a unique vertex in our graph of degree $m$ where $m-1$ edges belong to the copy of $K_m$ and one edge is the bridge to the copy of $P_n$. Label these $m-1$ edges $\epsilon _1 , \epsilon _2, \ldots , \epsilon _{m-1}$. Also let $f_i$ be the edge, which together with $\epsilon _i$ and $\epsilon _{i+1}$ form a triangle. See the illustrative diagram below.

\begin{figure}[h]
\begin{center}
\begin{tikzpicture}
\draw (0,1.5) ellipse (1 and .5);
\draw (0,-1) ellipse (3 and 1);
\filldraw [black] (0,1) circle (4pt);
\filldraw [black] (0,0) circle (4pt);
\filldraw [black] (-2,-1) circle (4pt);
\filldraw [black] (-.5,-1) circle (4pt);
\filldraw [black] (2,-1) circle (4pt);
\filldraw [black] (0.5,-1) circle (4pt);
\draw (0,0)--(-2,-1);
\draw (-2,-1)--(0.5,-1);
\draw (0,0)--(-.5,-1);
\draw (0,0)--(2,-1);
\draw (0,0)--(0,1);
\draw (0,0)--(0.5,-1);
\draw (0,1.5) node {$P_n$};
\draw (-3.3,-1) node {$K_m$};
\draw (1.2,-1) node {$\dots$};
\draw (-.5,-.5) node {$\epsilon_2$};
\draw (1.8,-.5) node {$\epsilon_{m-1}$};
\draw (-1.5,-.5) node {$\epsilon_1$};
\draw (-1.2,-1.3) node {$f_1$};
\draw (0,-1.3) node {$f_2$};
\end{tikzpicture}
\end{center}
\end{figure}

Note that if we remove any subsets $S\subseteq \{\epsilon_1, \epsilon_2, \ldots , \epsilon_{m-1}\}$ of equal size $k$ from $L_{m,n}$ then these graphs are isomorphic, and hence without loss of generality we will focus on $L_{m,n}-S_k$ where 
$S_k = \{\epsilon _1, \epsilon _2, \ldots , \epsilon _k\}$. Three cases of note are as follows. First, if we remove $S_0$, then by removing no edges $L_{m,n}$ remains unchanged, that is, $L_{m,n} - S_0 = L_{m,n}$. Second, if we remove $S_{m-1}$, then by removing all $m-1$ 
edges we separate $L_{m,n}$ into two connected components, one being $K_{m-1}$ and the other being $P_{n+1}$, that is, $L_{m,n}-S_{m-1} = K_{m-1}\cup P_{n+1}$. Third, if we remove $S_{m-2}$, then by removing all edges $\epsilon _i$ except $\epsilon _{m-1}$, we obtain $L_{m-1, n+1}$, that is, $L_{m,n}-S_{m-2} = L_{m-1,n+1}$.

Now let us apply Proposition~\ref{prop:OS} to $X_{L_{m,n}-S_{k-1}}$ for $1\leq k \leq m-2$ using the triangle
 with edges $\epsilon_k$, $\epsilon_{k+1}$, and $f_k$. We then get
$$X_{L_{m,n}-S_{k-1}}=X_{L_{m,n}-S_{k-1}-\{\epsilon_k\}}+X_{L_{m,n}-S_{k-1}-\{\epsilon_{k+1}\}}-X_{L_{m,n}-S_{k-1}-\{\epsilon_k,\epsilon_{k+1}\}}=2X_{L_{m,n}-S_{k}}-X_{L_{m,n}-S_{k+1}}.$$
If we apply this continually to $X_{L_{m,n}}=X_{L_{m,n}-S_0}$  we get
\begin{align*}
X_{L_{m,n}}&=2X_{L_{m,n}-S_{1}}-X_{L_{m,n}-S_{2}}\\
&=2(2X_{L_{m,n}-S_{2}}-X_{L_{m,n}-S_{3}})-X_{L_{m,n}-S_{2}}\\
&=3X_{L_{m,n}-S_{2}}-2X_{L_{m,n}-S_{3}}\\
&=3(2X_{L_{m,n}-S_{3}}-X_{L_{m,n}-S_{4}})-2X_{L_{m,n}-S_{3}}\\
&=4X_{L_{m,n}-S_{3}}-3X_{L_{m,n}-S_{4}}\\
&\vdots\\
&= kX_{L_{m,n}-S_{k-1}}-(k-1)X_{L_{m,n}-S_{k}}\\
&=k(2X_{L_{m,n}-S_{k}}-X_{L_{m,n}-S_{k+1}})-(k-1)X_{L_{m,n}-S_{k}}\\
&=(k+1)X_{L_{m,n}-S_{k}}-kX_{L_{m,n}-S_{k+1}}
\end{align*}
for $1\leq k\leq m-2$. In particular, at $k=m-2$  we get
\begin{align*}
X_{L_{m,n}}&=(m-1)X_{L_{m,n}-S_{m-2}}-(m-2)X_{L_{m,n}-S_{m-1}}\\
&=(m-1)X_{L_{m-1,n+1}}-(m-2)X_{K_{m-1}}X_{P_{n+1}}
\end{align*}
and we are done.
\end{proof}

We can now affirm directly the $e$-positivity, and hence Schur-positivity, of lollipop symmetric functions, which was proved indirectly in \cite[Corollary 7.7]{GebSag}.

\begin{theorem}\label{the:epos} For all $m\geq0$ and $n\geq 0$ we have that $X_{L_{m,n}}$ is $e$-positive, and hence Schur-positive.
\end{theorem}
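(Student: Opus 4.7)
Plan. I would bootstrap from Theorem~\ref{the:X_{L_{m,n}}} by iterating the recurrence all the way down to $L_{2,n+m-2}=P_{n+m}$, and then rearrange the resulting alternating expression into a manifestly $e$-positive form using an auxiliary identity for $X_{P_N}$.

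Base cases come for free: if $m\in\{0,1,2\}$, the identifications recorded just before Theorem~\ref{the:X_{L_{m,n}}} give $X_{L_{m,n}}=X_{P_{n+m}}$, which is $e$-positive by Wolfe's formula~\eqref{eq:Pn}, and if $n=0$ then $X_{L_{m,0}}=X_{K_m}=m!\,e_m$ is $e$-positive by~\eqref{eq:Km}. So suppose $m\geq 3$ and $n\geq 1$. Because the one-step recurrence carries a subtraction, a naive induction on $m$ cannot close up; instead I would iterate it all the way down. A short auxiliary induction on the number of iterations yields
\[
X_{L_{m,n}}=(m-1)!\,X_{P_{n+m}}-\sum_{k=2}^{m-1}\frac{(m-1)!\,(k-1)}{k!}\,X_{K_k}\,X_{P_{n+m-k}},
\]
which upon substituting $X_{K_k}=k!\,e_k$ from~\eqref{eq:Km} simplifies to $(m-1)!\,\bigl[X_{P_{n+m}}-\sum_{k=2}^{m-1}(k-1)\,e_k\,X_{P_{n+m-k}}\bigr]$.

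The finishing step uses the auxiliary path identity
\[
X_{P_N}=N\,e_N+\sum_{k=2}^{N-1}(k-1)\,e_k\,X_{P_{N-k}}\qquad(N\geq 1).
\]
Substituting this with $N=n+m$ into the bracket cancels the terms $k=2,\dots,m-1$ exactly, leaving
\[
X_{L_{m,n}}=(m-1)!\,(n+m)\,e_{n+m}+(m-1)!\sum_{k=m}^{n+m-1}(k-1)\,e_k\,X_{P_{n+m-k}},
\]
a nonnegative combination of products $e_k\cdot X_{P_j}$, each $e$-positive by~\eqref{eq:Pn}. Thus $X_{L_{m,n}}$ is $e$-positive, and Schur-positivity follows since every $e_\lambda$ is Schur-positive by~\eqref{eq:JT}.

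The one genuinely nontrivial ingredient is the path identity. I would establish it by matching the coefficient of each $e_\lambda$ on both sides using Wolfe's formula~\eqref{eq:Pn}; equivalently, this amounts to verifying the generating-function relation $F(t)\bigl[E(t)-tE'(t)\bigr]=E(t)$, where $F(t)=\sum_{N\geq 0}X_{P_N}t^N$ and $E(t)=\sum_{k\geq 0}e_k t^k$. This calculation is the main obstacle; everything else is mechanical.
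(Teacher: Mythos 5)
Your proposal is correct, and it takes a genuinely different route from the paper's proof. The paper handles the troublesome minus sign in Theorem~\ref{the:X_{L_{m,n}}} not by iterating but by \emph{rearranging}: solving the recurrence for the term with smaller clique size gives $X_{L_{m,n}}=\frac{1}{m}\bigl(X_{L_{m+1,n-1}}+(m-1)X_{K_m}X_{P_n}\bigr)$, a manifestly nonnegative combination, so a short induction on $n$ (with base case $X_{K_m}=m!\,e_m$) finishes immediately, with no auxiliary path identity needed. You instead iterate the recurrence down to $P_{n+m}$; your intermediate alternating expression is exactly the paper's Proposition~\ref{prop:lolliPn}, and your final positive formula is exactly the paper's Proposition~\ref{prop:lolliKm} after substituting $X_{K_k}=k!\,e_k$ (set $k=m+i$ there to see the match), which confirms your computation. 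The one ingredient you leave as a sketch, the identity $X_{P_N}=N e_N+\sum_{k=2}^{N-1}(k-1)e_k X_{P_{N-k}}$, is indeed true and need not be the obstacle you fear: proper colourings of $P_N$ are precisely Smirnov words (words with no two adjacent equal letters), whose classical generating function is $\bigl(1-\sum_i \frac{x_i t}{1+x_i t}\bigr)^{-1}$; multiplying numerator and denominator by $\prod_j(1+x_jt)=E(t)$ and noting $E'(t)=\sum_i x_i\prod_{j\neq i}(1+x_jt)$ yields exactly your relation $F(t)\bigl(E(t)-tE'(t)\bigr)=E(t)$, so you can cite this standard fact (cf.~\cite{Stan95}) rather than grind through coefficient matching with \eqref{eq:Pn}. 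The trade-off between the two arguments: the paper's rearranged induction is shorter and entirely self-contained, whereas your iteration imports one classical identity but delivers the explicit closed $e$-positive formula of Proposition~\ref{prop:lolliKm} in a single pass, rather than as a separate subsequent induction as in the paper.
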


\begin{proof} First note that when $m=0$ we have that $X_{L_{m,n}}= X_{P_n}$, which we know is $e$-positive by \eqref{eq:Pn} and Definition~\ref{def:chromsym}. We will now prove  the result by induction on $n$. When $n=0$ we have $X_{L_{m,0}}=X_{K_m}$, which we know is $e$-positive by \eqref{eq:Km} and Definition~\ref{def:chromsym}. Now assume that for any $m\geq 1$ and $0<k<n$ that $X_{L_{m,k}}$ is $e$-positive.

By Theorem~\ref{the:X_{L_{m,n}}} for $n\geq 0$ and $m\geq 2$ we have that 
\begin{align*}
X_{L_{m,n}}&=(m-1)X_{L_{m-1,n+1}}-(m-2)X_{K_{m-1}}X_{P_{n+1}}\\
\Rightarrow X_{L_{m,n}}+(m-2)X_{K_{m-1}}X_{P_{n+1}}&=(m-1)X_{L_{m-1,n+1}}.
\end{align*}
By shifting the indices we get for $m\geq 1$ and $n\geq 1$ that
\begin{align*}
mX_{L_{m,n}}&=X_{L_{m+1,n-1}}+(m-1)X_{K_{m}}X_{P_{n}}\\
\Rightarrow X_{L_{m,n}}&=\frac{1}{m}\left(X_{L_{m+1,n-1}}+(m-1)X_{K_{m}}X_{P_{n}}\right).
\end{align*}

By our inductive assumption we know that $X_{L_{m+1,n-1}}$ is $e$-positive. Plus, as stated earlier in the proof, $X_{K_m}$ and $X_{P_n}$ for all $m,n\geq 1$ are $e$-positive, and since elementary symmetric functions are multiplicative $X_{K_m}X_{P_n}$ is $e$-positive. Since we have now shown that $X_{L_{m,n}}$ is a linear combination of two $e$-positive symmetric functions, we can conclude that $X_{L_{m,n}}$ is $e$-positive, and hence Schur-positive. \end{proof}

A natural question to ask is whether succinct formulas exist for $X_{L_{m,n}}$ in terms of $X_{K_m}$ and $X_{P_n}$ since $K_m$ and $P_n$ are required to construct $L_{m,n}$. Such formulas are obtainable using Theorem~\ref{the:X_{L_{m,n}}} and the proof of Theorem~\ref{the:epos}, from which the expansion into elementary symmetric functions is immediate using \eqref{eq:Km} and \eqref{eq:Pn}.

\begin{proposition}\label{prop:lolliPn} For $m\geq 2$ and $n\geq 0$ we have that
$$X_{L_{m,n}}= (m-1)! \left( X_{P_{n+m}} - \sum _{i=1} ^{m-2} \frac{(m-i-1)}{(m-i)!} X_{K_{m-i}}X_{P_{n+i}}\right).$$
\end{proposition}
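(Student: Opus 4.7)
The plan is to iterate the recurrence of Theorem~\ref{the:X_{L_{m,n}}} from $(m,n)$ all the way down to $(2, n+m-2)$, and use the identification $L_{2,n+m-2}=P_{n+m}$ noted just before Theorem~\ref{the:X_{L_{m,n}}} to close out the recursion. In other words, the proposition should fall out immediately once we unfold the recurrence completely, so the main task is bookkeeping of coefficients.

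To do this cleanly, I would prove by induction on $k$ (for $1\le k\le m-2$) the intermediate identity
\begin{equation*}
X_{L_{m,n}}=\frac{(m-1)!}{(m-1-k)!}\,X_{L_{m-k,n+k}}-\sum_{i=1}^{k}\frac{(m-1)!\,(m-i-1)}{(m-i)!}\,X_{K_{m-i}}X_{P_{n+i}}.
\end{equation*}
The base case $k=1$ is literally Theorem~\ref{the:X_{L_{m,n}}}. For the inductive step, I would apply Theorem~\ref{the:X_{L_{m,n}}} to the $X_{L_{m-k,n+k}}$ appearing on the right, multiply through by $\frac{(m-1)!}{(m-1-k)!}$, and check that the newly produced term $\frac{(m-1)!}{(m-1-k)!}(m-k-2)\,X_{K_{m-k-1}}X_{P_{n+k+1}}$ is exactly the missing $i=k+1$ summand, using the identity $\frac{(m-1)!}{(m-1-k)!}\cdot(m-k-1)=\frac{(m-1)!}{(m-2-k)!}$ to update the coefficient of the remaining lollipop term.

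Finally I would set $k=m-2$. Then $\frac{(m-1)!}{(m-1-k)!}=(m-1)!$ and $L_{m-k,n+k}=L_{2,n+m-2}=P_{n+m}$, yielding
\begin{equation*}
X_{L_{m,n}}=(m-1)!\,X_{P_{n+m}}-\sum_{i=1}^{m-2}\frac{(m-1)!\,(m-i-1)}{(m-i)!}\,X_{K_{m-i}}X_{P_{n+i}},
\end{equation*}
which is the claimed formula after factoring out $(m-1)!$. The case $m=2$ is immediate from $L_{2,n}=P_{n+2}$ with an empty sum.

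There is no real obstacle here beyond careful indexing: the only thing one could stumble over is the coefficient $m-i-1$ (rather than $m-i$) in the summand, which comes from the coefficient $m-2$ in front of $X_{K_{m-1}}X_{P_{n+1}}$ in Theorem~\ref{the:X_{L_{m,n}}} when the index shifts by one at each step of the induction. I would double-check the small case $m=3$ (where the sum has a single term $i=1$ with coefficient $\frac{(m-i-1)}{(m-i)!}=\frac{1}{2}$) against a direct application of Theorem~\ref{the:X_{L_{m,n}}} as a sanity check before writing up the induction.
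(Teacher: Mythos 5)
Your proof is correct and is essentially the same argument as the paper's: both iterate the recurrence of Theorem~\ref{the:X_{L_{m,n}}} and close with $L_{2,n+m-2}=P_{n+m}$, the only difference being that the paper inducts directly on $m$ (applying the proposition itself at $(m-1,n+1)$) while you induct on the unfolding depth $k$ via an equivalent intermediate identity. The coefficient bookkeeping, including the source of the factor $m-i-1$, matches the paper's computation exactly.
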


\begin{proof} We prove this by induction on $m$ for $m\geq 2$. When $m=2$ and $n\geq 0$ we have that
$$X_{L_{2,n}}=X_{P_{n+2}}$$as desired. Now assume that the result holds for all $2<k<m$ and $n\geq 0$. Then by Theorem~\ref{the:X_{L_{m,n}}} we have that
\begin{align*}
X_{L_{m,n}} =& (m-1) X_{L_{m-1, n+1}} - (m-2) X_{K_{m-1}}X_{P_{n+1}}\\
=& (m-1) \left( (m-2)! \left( X_{P_{n+m}} - \sum _{i=1}^{m-3} \frac{(m-i-2)}{(m-i-1)!} X_{K_{m-i-1}}X_{P_{n+i+1}}\right)\right) \\&- (m-2) X_{K_{m-1}}X_{P_{n+1}}\\
=&(m-1)! \left( X_{P_{n+m}} - \sum _{i=1} ^{m-2} \frac{(m-i-1)}{(m-i)!} X_{K_{m-i}}X_{P_{n+i}}\right)
\end{align*}by induction.
\end{proof}

This first formula will play a key role in the next section, while the second formula below yields an explicit $e$-positive formula for $X_{L_{m,n}}$ using \eqref{eq:Km} and \eqref{eq:Pn}.

\begin{proposition}\label{prop:lolliKm} For $m\geq 2$ and $n\geq 0$ we have that
$$X_{L_{m,n}}= \frac{(m-1)!}{ (m+n-1)!} X_{K_{m+n}} + \sum _{i=0} ^{n-1} \frac{(m+i-1)}{m(m+1)\cdots (m+i)} X_{K_{m+i}}X_{P_{n-i}}.$$
\end{proposition}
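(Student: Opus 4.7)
The plan is to prove the formula by induction on $n$, using the rewritten form of the recurrence already derived in the proof of Theorem~\ref{the:epos}, namely
$$X_{L_{m,n}}=\tfrac{1}{m}\Bigl(X_{L_{m+1,n-1}}+(m-1)X_{K_{m}}X_{P_{n}}\Bigr)\qquad (m\geq 1,\ n\geq 1).$$
This recurrence shifts $m\mapsto m+1$ and $n\mapsto n-1$, so at fixed ``weight'' $m+n$ it lets us trade the lollipop on the right for a larger-clique lollipop plus a ``leftover'' product $X_{K_{m}}X_{P_{n}}$. The target formula is essentially a telescoping of precisely such leftovers, so this recurrence is tailor-made for the job.

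First I would dispose of the base case $n=0$: then the stated sum is empty and the formula collapses to $X_{L_{m,0}}=\frac{(m-1)!}{(m-1)!}X_{K_{m}}=X_{K_{m}}$, which agrees with the definition. Next, assuming the formula holds for $n-1$ (and all $m\geq 2$), I would apply the recurrence above and substitute the inductive hypothesis for $X_{L_{m+1,n-1}}$:
\begin{align*}
\tfrac{1}{m}X_{L_{m+1,n-1}} &= \frac{(m-1)!}{(m+n-1)!}X_{K_{m+n}} + \sum_{i=0}^{n-2}\frac{(m+i)}{m(m+1)\cdots(m+i+1)}X_{K_{m+i+1}}X_{P_{n-1-i}}.
\end{align*}
Reindexing the sum by $j=i+1$ turns it into $\sum_{j=1}^{n-1}\frac{(m+j-1)}{m(m+1)\cdots(m+j)}X_{K_{m+j}}X_{P_{n-j}}$, i.e.\ exactly the $j\geq 1$ part of the target sum. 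Adding the other term $\frac{m-1}{m}X_{K_m}X_{P_n}$ from the recurrence supplies precisely the missing $j=0$ summand $\frac{(m-1)}{m}X_{K_m}X_{P_n}$, completing the induction.

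The main obstacle is just the bookkeeping of the rising factorial $m(m+1)\cdots(m+i)$ after the shift $m\mapsto m+1$: one must check that $\frac{1}{m}\cdot\frac{m+i}{(m+1)(m+2)\cdots(m+i+1)}$ re-matches the target coefficient $\frac{m+j-1}{m(m+1)\cdots(m+j)}$ after the substitution $j=i+1$, which it does. A stylistic alternative would be to start from Proposition~\ref{prop:lolliPn} and re-expand $X_{P_{n+m}}$ via the triple-deletion/clique-path relations, but this requires extra identities relating $X_{P_{m+n}}$ to $X_{K_{m+n}}$ plus products $X_{K_{m+i}}X_{P_{n-i}}$, whereas the inductive approach above needs no machinery beyond Theorem~\ref{the:X_{L_{m,n}}}.
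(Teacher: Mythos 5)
Your proposal is correct and matches the paper's own proof essentially verbatim: both proceed by induction on $n$ with base case $X_{L_{m,0}}=X_{K_m}$, apply the rearranged recurrence $X_{L_{m,n}}=\frac{1}{m}\bigl(X_{L_{m+1,n-1}}+(m-1)X_{K_m}X_{P_n}\bigr)$ from the proof of Theorem~\ref{the:epos}, substitute the inductive hypothesis for $X_{L_{m+1,n-1}}$, and reindex so the leftover term $\frac{m-1}{m}X_{K_m}X_{P_n}$ supplies the $i=0$ summand. Your coefficient bookkeeping, including the rising factorial check $\frac{1}{m}\cdot\frac{m+i}{(m+1)\cdots(m+i+1)}=\frac{m+j-1}{m(m+1)\cdots(m+j)}$ with $j=i+1$, is exactly right.
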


\begin{proof} We prove this by induction on $n$ for $n\geq0$. When $n=0$ and $m\geq 2$ we have that
$$X_{L_{m,0}}=X_{K_{m}}$$as desired. Now assume that the result holds for all $0<k<n$ and $m\geq 2$. Then by the last equation in the proof of Theorem~\ref{the:epos} we have that
\begin{align*}
X_{L_{m,n}} =& \frac{1}{m}\left( X_{L_{m+1,n-1}} + (m-1) X_{K_m}X_{P_n}\right)\\
=&\frac{1}{m}\left(  \frac{m!}{(m+n-1)!} X_{K_{m+n}} \right.\\
&\left. + \sum _{i=0} ^{n-2} \frac{(m+i)}{(m+1)\cdots (m+i+1)} X_{K_{m+i+1}}X_{P_{n-i-1}}+ (m-1) X_{K_m}X_{P_n}\right)\\
=&\frac{(m-1)!}{(m+n-1)!} X_{K_{m+n}} + \sum _{i=0} ^{n-1} \frac{(m+i-1)}{m(m+1)\cdots (m+i)} X_{K_{m+i}}X_{P_{n-i}}
\end{align*}by induction.
\end{proof}

\begin{lemma}\label{lem:chrompoly} For $m\geq 2$ and $n\geq 0$ we have that
$$\chi _{L_{m,n}} (x) = x(x-1)^{n+1}(x-2) \cdots (x-(m-1)).$$
\end{lemma}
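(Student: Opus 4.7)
The plan is to give a direct proper-colouring count, since $L_{m,n}$ admits a very clean decomposition into a clique with a dangling path. I would label the vertices of the $K_m$ part as $u_1,\ldots,u_m$ (with $u_1$ the endpoint of the bridge to $P_n$) and the vertices of the $P_n$ part as $w_1,\ldots,w_n$, where $w_1$ is the other endpoint of the bridge and $w_i$ is adjacent to $w_{i+1}$ for $1\leq i\leq n-1$. This uses only the construction of $L_{m,n}$ given just before Definition~\ref{def:lolli}.

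Given a palette $\{1,2,\ldots,x\}$, I would build a proper colouring in two stages. First colour $K_m$: since its vertices are pairwise adjacent this can be done in $x(x-1)(x-2)\cdots(x-(m-1))$ ways, as in Example~\ref{ex:chromsymtopoly}. Next colour the path, one vertex at a time along $w_1,w_2,\ldots,w_n$. The vertex $w_1$ has only the single neighbour $u_1$ among already-coloured vertices, so it has $x-1$ valid colours; similarly each subsequent $w_{i+1}$ has only $w_i$ as a coloured neighbour, contributing another factor of $x-1$. The number of proper colourings of the path extension is therefore $(x-1)^n$, independent of how $K_m$ was coloured. Multiplying gives
\[
\chi_{L_{m,n}}(x) = x(x-1)(x-2)\cdots(x-(m-1))\cdot(x-1)^n = x(x-1)^{n+1}(x-2)\cdots(x-(m-1)),
\]
which is the claimed formula.

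If one prefers a recursive route, the same identity follows by induction on $n$ via the deletion-contraction relation \eqref{eq:delcon} applied to the bridge edge $\varepsilon$ joining $u_1$ to $w_1$. Deletion yields the disjoint union $K_m\sqcup P_n$ with chromatic polynomial $\chi_{K_m}(x)\chi_{P_n}(x)=x(x-1)\cdots(x-(m-1))\cdot x(x-1)^{n-1}$, while contraction identifies $u_1$ with $w_1$ and yields $L_{m,n-1}$, interpreting $L_{m,0}=K_m$ when $n=1$. The resulting recursion $\chi_{L_{m,n}}(x)=\chi_{K_m}(x)\chi_{P_n}(x)-\chi_{L_{m,n-1}}(x)$ with base case $\chi_{L_{m,0}}(x)=\chi_{K_m}(x)$ collapses after a one-line algebraic simplification into the claimed product. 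I do not anticipate any real obstacle in either approach; the only minor point of care is handling the degenerate values $n=0$ and $n=1$ in the recursive version, which is why I would favour presenting the direct colouring count.
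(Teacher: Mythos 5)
Your proof is correct, but it takes a genuinely different route from the paper's. The paper proceeds by induction on $n$: the base case $n=0$ is $L_{m,0}=K_m$, whose chromatic polynomial $x(x-1)\cdots(x-(m-1))$ comes from Example~\ref{ex:chromsymtopoly}, and the inductive step applies deletion-contraction \eqref{eq:delcon} to the \emph{pendant} edge at the far end of the path: deletion gives $L_{m,n-1}$ together with an isolated vertex (polynomial $x\,\chi_{L_{m,n-1}}(x)$), contraction gives $L_{m,n-1}$, so $\chi_{L_{m,n}}(x)=x\,\chi_{L_{m,n-1}}(x)-\chi_{L_{m,n-1}}(x)=(x-1)\,\chi_{L_{m,n-1}}(x)$, and the formula follows. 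Your primary argument realizes the same factor-of-$(x-1)$-per-path-vertex idea, but as a direct greedy count rather than an induction: colour the clique in $x(x-1)\cdots(x-(m-1))$ ways, then colour $w_1,\ldots,w_n$ in order, each vertex having exactly one already-coloured neighbour and hence exactly $x-1$ admissible colours. This avoids induction entirely and absorbs the case $n=0$ without comment, so it is arguably the most elementary of the three arguments. Your alternative recursion at the \emph{bridge} edge is also valid, but it is the least economical: deleting the bridge yields the disjoint union $K_m\sqcup P_n$ with polynomial $\chi_{K_m}(x)\chi_{P_n}(x)$, so the recursion $\chi_{L_{m,n}}(x)=\chi_{K_m}(x)\chi_{P_n}(x)-\chi_{L_{m,n-1}}(x)$ requires an extra factoring step to collapse, whereas the paper's choice of the pendant edge makes the factor $(x-1)$ appear immediately. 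All three arguments are sound; the differences are purely matters of economy and of which edge (or no edge at all) one chooses to induct on.
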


\begin{proof} We prove this by induction on $n$. For $n=0$ and $m\geq 2$ we have that $L_{m,n}=K_m$ whose chromatic polynomial, which we computed in Example~\ref{ex:chromsymtopoly}, is well known to be 
$$x(x-1)(x-2) \cdots (x-(m-1)).$$Now assume that the result holds for all $0<k<n$ and $m\geq 2$. Then by deletion-contraction where we delete an edge incident to  a degree 1 vertex
$$\chi _{L_{m,n}} (x) = (x-1)\chi _{L_{m,n-1}}= (x-1)x(x-1)^{n}(x-2) \cdots (x-(m-1))$$as desired.
\end{proof}

\begin{lemma}\label{lem:distinct} If $L_{m,n}$ and $L_{m',n'}$ satisfy $(m,n)\neq (m',n')$ where $m,m'\geq2$ and $ n, n'\geq 0$, then
$$X_{L_{m,n}}\neq X_{L_{m',n'}}.$$
\end{lemma}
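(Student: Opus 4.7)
The plan is to reduce the statement about chromatic symmetric functions to one about chromatic polynomials, and then read $(m,n)$ directly off the factored form provided by Lemma~\ref{lem:chrompoly}. By Proposition~\ref{prop:chromsymtopoly}, if $X_{L_{m,n}} = X_{L_{m',n'}}$ then specializing $x_1=\cdots=x_r=1$ and all other variables to $0$ for every positive integer $r$ gives $\chi_{L_{m,n}}(r) = \chi_{L_{m',n'}}(r)$ for all $r$, hence $\chi_{L_{m,n}} = \chi_{L_{m',n'}}$ as polynomials. So it suffices to show that the map $(m,n) \mapsto \chi_{L_{m,n}}$ is injective on the index set $m\geq 2$, $n\geq 0$.

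Now I would invoke Lemma~\ref{lem:chrompoly}, which tells us
$$\chi_{L_{m,n}}(x) = x(x-1)^{n+1}(x-2)\cdots(x-(m-1)).$$
From this explicit factorization one can extract $(m,n)$ in two clean ways. The first is by degree and root multiplicity: the degree of $\chi_{L_{m,n}}$ equals $m+n$ (the number of vertices), while $1$ is a root of multiplicity exactly $n+1$ (since the only factor contributing $(x-1)$ is $(x-1)^{n+1}$, noting that the consecutive linear factors $(x-2), \ldots, (x-(m-1))$ never contribute $(x-1)$). Thus $n = \operatorname{mult}_1(\chi_{L_{m,n}}) - 1$ and $m = \deg \chi_{L_{m,n}} - n$, so $(m,n)$ is uniquely determined by $\chi_{L_{m,n}}$. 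Assuming $\chi_{L_{m,n}} = \chi_{L_{m',n'}}$ then forces $(m,n) = (m',n')$, contradicting our hypothesis.

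There are no real obstacles here; the only thing to watch is the boundary case $m=2$ (when the product $(x-2)\cdots(x-(m-1))$ is empty, so $\chi_{L_{2,n}}(x) = x(x-1)^{n+1}$), but the multiplicity-of-$1$ plus degree argument still applies uniformly. I would therefore structure the proof in two short steps: first the reduction via Proposition~\ref{prop:chromsymtopoly}, then the extraction of $(m,n)$ from the factorization in Lemma~\ref{lem:chrompoly}, concluding by contrapositive.
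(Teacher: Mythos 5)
Your proposal is correct and follows essentially the same route as the paper: both reduce to chromatic polynomials via Proposition~\ref{prop:chromsymtopoly} and then recover $(m,n)$ from the factorization in Lemma~\ref{lem:chrompoly}. The only difference is that the paper simply asserts that $m$ and $n$ are ``recoverable'' from the polynomial, whereas you make this explicit (degree $m+n$ and multiplicity $n+1$ of the root $1$), which is a welcome amplification rather than a departure.
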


\begin{proof} First observe that by Lemma~\ref{lem:chrompoly} that since $m$ and $n$ are recoverable from the chromatic polynomial, if $(m,n)\neq (m',n')$ then $\chi _{L_{m,n}}(x) \neq \chi _{L_{m',n'}}(x) $. Furthermore by Proposition~\ref{prop:chromsymtopoly} the chromatic symmetric function reduces to the chromatic polynomial and hence if $(m,n)\neq (m',n')$ then
$X_{L_{m,n}}\neq X_{L_{m',n'}}.$
\end{proof}

We are now ready to prove that there exist countably infinite distinct bases arising from chromatic symmetric functions that are $e$-positive, and hence Schur-positive.

\begin{theorem}\label{the:infbases} There exist countably infinite distinct $e$-positive, and hence Schur-positive, bases of the algebra of symmetric functions $\Lambda$ whose generators are the chromatic symmetric functions of connected graphs. In particular, every distinct set of lollipops $\{ {\mathcal L}_{1} , {\mathcal L}_{2}, \ldots \}$ where ${\mathcal L}_i=L_{m_i,n_i}$ for some $m_i+n_i = i$ gives rise to a distinct set of generators $\{ X_{\mathcal{L}_1}, X_{\mathcal{L}_2}, \ldots \}$ such that $$\Lambda = \mathbb{Q}[ X_{\mathcal{L}_1}, X_{\mathcal{L}_2}, \ldots ].$$
\end{theorem}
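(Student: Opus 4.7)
The plan is to show that any family $\{\mathcal{L}_i\}_{i\geq 1}$ of the described form yields a set of algebraically independent $e$-positive generators of $\Lambda$, and that different families yield different basis sets. Since $\Lambda=\bQ[e_1,e_2,\ldots]$ is a free polynomial algebra on the $e_i$, it suffices to show each $X_{\mathcal{L}_i}$ is homogeneous of degree $i$ with a nonzero coefficient on $e_i$ in its $e$-expansion; a standard triangularity argument then lets us solve inductively for each $e_i$ as a polynomial in $X_{\mathcal{L}_1},\ldots,X_{\mathcal{L}_i}$.

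First I would note that $\mathcal{L}_i=L_{m_i,n_i}$ has $m_i+n_i=i$ vertices, so $X_{\mathcal{L}_i}\in\Lambda^i$, and by Theorem~\ref{the:epos} it is $e$-positive. The key computation is to extract the coefficient of the one-part elementary $e_{(i)}=e_i$. When $m_i\geq 2$, Proposition~\ref{prop:lolliKm} writes
$$X_{\mathcal{L}_i}=\frac{(m_i-1)!}{(i-1)!}X_{K_i}+\sum_{k=0}^{n_i-1}\frac{(m_i+k-1)}{m_i(m_i+1)\cdots(m_i+k)}X_{K_{m_i+k}}X_{P_{n_i-k}},$$
and using $X_{K_j}=j!\,e_j$ from \eqref{eq:Km}, the leading term contributes $(m_i-1)!\cdot i\cdot e_i$. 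Each summand is a product $X_{K_{m_i+k}}X_{P_{n_i-k}}$ of two symmetric functions of positive degree, hence expands in $e_\mu$ with $\ell(\mu)\geq 2$ and cannot contribute to $e_{(i)}$. So the coefficient of $e_i$ in $X_{\mathcal{L}_i}$ equals $(m_i-1)!\cdot i>0$. For the remaining cases $m_i\in\{0,1\}$ we have $\mathcal{L}_i=P_i$, and substituting $a_i=1$, $a_j=0$ for $j\neq i$ into \eqref{eq:Pn} yields coefficient $(i-1)+1=i>0$.

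With the nonzero $e_i$-coefficient established, an induction on $i$ expresses each $e_i$ as a rational polynomial in $X_{\mathcal{L}_1},\ldots,X_{\mathcal{L}_i}$, and conversely the $X_{\mathcal{L}_i}$ are algebraically independent since any polynomial relation would, upon extracting the coefficient of the highest appearing $e_j$, contradict triangularity. Therefore $\Lambda=\bQ[X_{\mathcal{L}_1},X_{\mathcal{L}_2},\ldots]$, giving an $e$-positive (hence Schur-positive) polynomial basis.

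Finally, to produce countably many such bases: by Lemma~\ref{lem:distinct} distinct lollipops have distinct chromatic symmetric functions, so different families $\{\mathcal{L}_i\}$ give different generating sets. For each $i\geq 3$ there are at least two valid choices of $\mathcal{L}_i$ (for instance $K_i$ and $P_i$), so there are uncountably many distinct such families and hence uncountably many, a fortiori countably infinite, distinct $e$-positive bases. The main technical obstacle is verifying the nonzero $e_i$-coefficient; this hinges on the simple but essential observation that the product terms in Proposition~\ref{prop:lolliKm} expand entirely into $e_\mu$ with $\ell(\mu)\geq 2$ and so cannot cancel the contribution from $X_{K_i}$.
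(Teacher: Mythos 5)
Your proof is correct, but it takes a genuinely different route from the paper. The paper disposes of the generation statement in one stroke by citing \cite[Theorem 5]{ChovW}, which says that \emph{any} family of connected graphs with exactly one graph on $i$ vertices for each $i$ yields chromatic generators of $\Lambda$ (the triangularity there is carried out in the power-sum basis, where connectedness guarantees a nonzero coefficient on $p_{(i)}$); it then only needs Theorem~\ref{the:epos} plus multiplicativity for $e$-positivity of the products, and Lemma~\ref{lem:distinct} for distinctness. You instead reprove the generation claim from scratch for lollipops, running the triangularity directly in the $e$-basis: your key observation, that every summand $X_{K_{m_i+k}}X_{P_{n_i-k}}$ in Proposition~\ref{prop:lolliKm} is a product of two positive-degree symmetric functions and hence expands into $e_\mu$ with $\ell(\mu)\geq 2$, correctly isolates the coefficient of $e_{(i)}$ as $(m_i-1)!\,i>0$ (with the cases $m_i\in\{0,1\}$ rightly reduced to $P_i$ via \eqref{eq:Pn}), and the triangular substitution $X_{\mathcal{L}_i}=c_ie_i+q_i(e_1,\ldots,e_{i-1})$ with $c_i\neq 0$ does give both generation and algebraic independence (one can also get independence by counting: the $p(N)$ products $X_{\mathcal{L}_{\lambda_1}}\cdots X_{\mathcal{L}_{\lambda_\ell}}$, $\lambda\vdash N$, span $\Lambda^N$, which has dimension $p(N)$). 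What each approach buys: the paper's citation is shorter and covers arbitrary connected graphs; yours is self-contained within the paper's own results and produces the explicit leading coefficient, which is a small bonus. Two cosmetic points: you should state explicitly that the basis \emph{elements} (the products, not just the generators) are $e$-positive, which follows since products of $e$-positive functions are $e$-positive; and your counting remark in fact shows there are \emph{uncountably} many such families, which is stronger than, and certainly implies, the countably infinite claim -- the paper's own count, $k-1$ choices at each $k\geq 3$, likewise gives more than countably many sets, so nothing is amiss there.
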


\begin{proof} To begin, first recall that $L_{1,0}=L_{0,1}=K_1=P_1$, plus $L_{2,0}=L_{1,1}=L_{0,2}=K_2=P_2$, and for a given integer $k\geq 3$ there exist $k-1$ distinct lollipop graphs
$$L_{k,0}, \ldots , L_{2, k-2}.$$ Hence there exist countably infinite distinct sets of lollipop graphs 
$$\{ L_{m_1, n_1} , L_{m_2, n_2}, \ldots \}$$where $m_i+n_i = i$. Given such a set $\mathcal{L}$, denote $L_{m_i, n_i}$ by $\mathcal{L}_i$. Then by \cite[Theorem 5]{ChovW}
$$\{ X_{\mathcal{L}_{\lambda _1}}X_{\mathcal{L}_{\lambda _2}}\cdots X_{\mathcal{L}_{\lambda _\ell}} \suchthat (\lambda _1, \lambda _2, \ldots ,\lambda _\ell )\vdash N\}$$is a $\mathbb{Q}$-basis for $\Lambda ^N$ and
$$\Lambda = \mathbb{Q}[ X_{\mathcal{L}_1}, X_{\mathcal{L}_2}, \ldots ].$$Moreover, since each $X_{\mathcal{L}_i}$ is $e$-positive by Theorem~\ref{the:epos} and elementary symmetric functions are multiplicative, it follows that each
$$X_{\mathcal{L}_{\lambda _1}}X_{\mathcal{L}_{\lambda _2}}\cdots X_{\mathcal{L}_{\lambda _\ell}} $$above is $e$-positive, and hence Schur-positive. By Lemma~\ref{lem:distinct} it follows that every set $\mathcal{L}$ yields a distinct set of generators $\{ X_{\mathcal{L}_1}, X_{\mathcal{L}_2}, \ldots \}$ for $\Lambda$ and we are done.
\end{proof}

\section{The chromatic symmetric function of lariats}\label{sec:lariat}

In this final section we resolve 6 conjectures from 1998 \cite[p 656]{Wolfe} regarding the chromatic symmetric function of lariat graphs, where the \emph{lariat graph} $L_{n+3}$ for $n\geq 0$ is defined to be
$$L_{n+3}=L_{3,n}.$$More precisely we prove 5 of the conjectures in Theorem~\ref{the:lariat} and conclude with a counterexample to the sixth.

Before we do, we note that setting $m=3$ in Proposition~\ref{prop:lolliPn} yields the following expression for the chromatic symmetric function of $L_{n+3}$ for $n\geq0$
\begin{equation}\label{eq:lariat}
X_{L_{n+3}}= 2\left( X_{P_{n+3}} - \frac{1}{2}X_{K_2}X_{P_{n+1}}\right) = 2 X_{P_{n+3}} - 2e_2X_{P_{n+1}}
\end{equation}since $X_{K_2}=2e_2$ by \eqref{eq:Km}. We are now ready to prove our theorem.

\begin{theorem}\label{the:lariat} Consider $X_{L_{n+3}}$ for $n\geq 0$ expanded into the basis of elementary symmetric functions.
\begin{enumerate}
\item If $e_\lambda$ appears in $X_{L_{n+3}}$ with nonzero coefficient, then $e_\lambda$ appears in $X_{P_{n+3}}$ with nonzero coefficient.
\item Let $\lambda$ be a partition with no part equal to 2. Then if $e_\lambda$ appears in $X_{P_{n+3}}$ with coefficient $c_\lambda$, then $e_\lambda$ appears in $X_{L_{n+3}}$ with coefficient $2c_\lambda$.
\item The coefficient of $e_{(n+1,2)}$ in $X_{L_{n+3}}$ is $4n$.
\item The coefficient of $e_{(n,2,1)}$ in $X_{L_{n+3}}$ is $2(n-1)$.
\item The coefficient of $e_{(n-1,2,2)}$ in $X_{L_{n+3}}$ is $4(n-2)$.
\end{enumerate}
\end{theorem}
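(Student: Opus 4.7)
The plan is to use equation \eqref{eq:lariat}, $X_{L_{n+3}} = 2X_{P_{n+3}} - 2e_2 X_{P_{n+1}}$, to reduce every coefficient computation to direct applications of the explicit formula \eqref{eq:Pn}. The starting observation is that in the elementary basis $e_2\cdot e_\mu = e_{\mu\cup(2)}$, so if we write $\lambda\setminus(2)$ for the partition obtained from $\lambda$ by removing one part equal to $2$ (with the corresponding coefficient interpreted as $0$ whenever $\lambda$ contains no such part), then for every partition $\lambda$,
$$[e_\lambda]X_{L_{n+3}} \;=\; 2\,[e_\lambda]X_{P_{n+3}} \;-\; 2\,[e_{\lambda\setminus(2)}]X_{P_{n+1}}.$$
Each of the five statements will be deduced from this identity together with \eqref{eq:Pn}.

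First I would prove the following auxiliary vanishing lemma extracted from \eqref{eq:Pn}: for $N\geq 1$, the coefficient of $e_\lambda$ in $X_{P_N}$ is zero if and only if $\lambda$ has at least two parts equal to $1$. This is a quick inspection of the two summands of \eqref{eq:Pn}, using the convention $0^0=1$: the principal term carries the factor $0^{a_1}$, and in the secondary sum the only term that escapes a $0^{a_1}$ factor is the $i=1$ summand, which itself carries the factor $0^{a_1-1}$. With this lemma, (1) and (2) are immediate. For (1), the contrapositive: if $[e_\lambda]X_{P_{n+3}}=0$ then $\lambda$ has at least two parts equal to $1$; removing a part equal to $2$ leaves this count unchanged, so $[e_{\lambda\setminus(2)}]X_{P_{n+1}}=0$ as well, and both terms on the right vanish. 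For (2), the hypothesis that $\lambda$ contains no part equal to $2$ forces the second term to be zero by definition, giving $[e_\lambda]X_{L_{n+3}} = 2c_\lambda$.

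Parts (3), (4), (5) then reduce to plugging six explicit partitions into \eqref{eq:Pn} and assembling via the identity. I expect the path coefficients
$$[e_{(n+1,2)}]X_{P_{n+3}}=3n+1,\qquad [e_{(n+1)}]X_{P_{n+1}}=n+1,$$
$$[e_{(n,2,1)}]X_{P_{n+3}}=2(n-1),\qquad [e_{(n,1)}]X_{P_{n+1}}=n-1,$$
$$[e_{(n-1,2,2)}]X_{P_{n+3}}=5n-9,\qquad [e_{(n-1,2)}]X_{P_{n+1}}=3n-5,$$
from which the stated values $4n$, $2(n-1)$, $4(n-2)$ fall out by simple arithmetic.

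The main obstacle is not conceptual; it is the bookkeeping in (4) and (5), where the relevant partitions mix a part equal to $1$ with one or two parts equal to $2$, so several summands in \eqref{eq:Pn} collapse to zero via the $0^{a_1}$ factor while others hit the $0^0=1$ degeneracy when $i=1$. One just has to enumerate the surviving contributions carefully to extract the clean coefficients above.
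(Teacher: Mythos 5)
Your proposal is correct, and for parts (2)--(5) it coincides with the paper's proof: the paper likewise starts from \eqref{eq:lariat}, extracts coefficients via $e_2e_\mu=e_{\mu\cup(2)}$, and plugs the same six partitions into \eqref{eq:Pn}; your intermediate path coefficients $3n+1$, $n+1$, $2(n-1)$, $n-1$, $5n-9$, $3n-5$ all agree with the paper's (which records them doubled and unsimplified, e.g.\ $2(2n+n+1)$ and $2\bigl(3(n-2)+2(n-2)+1\bigr)$), and they assemble to $4n$, $2(n-1)$, $4(n-2)$ exactly as you say. Where you genuinely diverge is part (1). The paper argues softly: rewriting \eqref{eq:lariat} as $X_{L_{n+3}}+2e_2X_{P_{n+1}}=2X_{P_{n+3}}$, it invokes the $e$-positivity of $X_{L_{n+3}}$ (Theorem~\ref{the:epos}) and of $X_{P_{n+3}}$, so any $e_\lambda$ occurring in $X_{L_{n+3}}$ must occur in $X_{P_{n+3}}$. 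You instead prove the sharper vanishing lemma that $[e_\lambda]X_{P_N}=0$ if and only if $\lambda$ has at least two parts equal to $1$ --- which is correct: $a_1\geq 2$ kills every summand of \eqref{eq:Pn}, while for $a_1\leq 1$ either the principal term (when $a_1=0$) or the $i=1$ summand (when $a_1=1$, via $0^0=1$) is strictly positive and no summand is negative --- and you run the contrapositive, using both directions of the equivalence (one for $P_{n+3}$, the other for $P_{n+1}$ after removing a part $2$, which leaves the number of $1$'s unchanged). Your route is more hands-on but buys self-containedness: it never needs the $e$-positivity of lollipops, and the lemma pins down exactly which $e_\lambda$ survive in a path expansion, a fact the paper never states. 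One shared caveat, which is not a gap in your proposal relative to the paper: both computations for (3)--(5) are generic in that they assume the displayed partitions have the expected part multiplicities (e.g.\ your value $3n+1$ for $[e_{(n+1,2)}]X_{P_{n+3}}$ presumes $n+1\neq 2$); at the degenerate boundary values such as $n=1$ in (3), where $(n+1,2)=(2,2)$ and in fact $X_{L_4}=4e_{(3,1)}+8e_{(4)}$ so the coefficient is $0$ rather than $4$, the formulas from \eqref{eq:Pn} change --- but the paper's own proof has precisely the same blind spot.
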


\begin{proof} The first part follows from \eqref{eq:lariat} and the fact that $X_{L_{n+3}}$ is $e$-positive by Theorem~\ref{the:epos} and $X_{P_{n+3}}$ is $e$-positive by \eqref{eq:Pn}. The second part follows immediately from \eqref{eq:lariat}. For the third, fourth and fifth parts, respectively, we again use \eqref{eq:lariat},
$$X_{L_{n+3}} = 2 X_{P_{n+3}} - 2e_2X_{P_{n+1}},$$in conjunction with \eqref{eq:Pn} to obtain the following coefficients.
\begin{itemize}
\item To compute the coefficient of $e_{(n+1,2)}$ in $X_{L_{n+3}}$, note that its coefficient in $2X_{P_{n+3}}$ is
$$2(2(n+1-1)+(n+1-1)+1)$$whereas in $2e_2X_{P_{n+1}}$ it is $2((n+1-1)+1)$.
\item To compute the coefficient of $e_{(n,2,1)}$ in $X_{L_{n+3}}$, note that its coefficient in $2X_{P_{n+3}}$ is
$$2(2(n-1))$$whereas in $2e_2X_{P_{n+1}}$ it is $2(n-1)$.
\item To compute the coefficient of $e_{(n-1,2,2)}$ in $X_{L_{n+3}}$, note that its coefficient in $2X_{P_{n+3}}$ is
$$2(3(n-1-1)+2(n-1-1)+1)$$whereas in $2e_2X_{P_{n+1}}$ it is $2(2(n-1-1)+(n-1-1)+1)$.
\end{itemize}
\end{proof}

The final conjecture stated that if $e_\lambda$, where $\lambda = (1^{a_1}, 2^{a_2}, \ldots , (n+3)^{a_{n+3}})$, appears with nonzero coefficient in $X_{L_{n+3}}$ when expanded into the basis of elementary symmetric functions, then $a_i \leq 2$ for all $i$. This is false since $L_9=L_{3,6}$, which is the smallest counterexample and is shown earlier in Section~\ref{sec:lollipop}, has chromatic symmetric function
\begin{align*}
X_{L_9}&= 8e_{(3,2,2,2)}+16 e_{(3,3,2,1)}+24e_{(3,3,3)}+6e_{(4,2,2,1)}+82e_{(4,3,2)}\\
&+18e_{(4,4,1)}+16e_{(5,2,2)}+32e_{(5,3,1)}+62e_{(5,4)}+10e_{(6,2,1)}\\
&+54e_{(6,3)}+24e_{(7,2)}+14e_{(8,1)}+18e_{(9)}.
\end{align*}

\section*{Acknowledgements}\label{sec:acknow} The authors would like to thank Ang\`{e}le Hamel and Foster Tom for conversations that sparked fruitful avenues of research, and Richard Stanley and the referees for helpful comments.

\bibliographystyle{siamplain}

\def\cprime{$'$}

\end{document}